\documentclass[10pt]{article}

\usepackage{amsmath,amsfonts,amssymb,amsthm,graphicx}
\usepackage{enumerate}
\usepackage{etoolbox}
\usepackage{color}
\usepackage[colorlinks=true,citecolor=blue,pdfpagemode=UseNone,pdfstartview=FitH]{hyperref}

\usepackage[T2A]{fontenc}

\allowdisplaybreaks[4]

\newcommand{\noop}[1]{}

\newcommand{\N}{\mathbb{N}}

\newcommand{\tower}{\mathrm{tower}}

\newcommand{\NNN}{\mathcal{N}}

\newcounter{cnumber}
\newcommand{\newc}{\stepcounter{cnumber}c_{\arabic{cnumber}}}
\newcommand{\oldc}{c_{\arabic{cnumber}}}

\theoremstyle{plain}
\newtheorem{theorem}{Theorem}
\newtheorem{corollary}[theorem]{Corollary}
\newtheorem{lemma}[theorem]{Lemma}

\theoremstyle{definition}

\theoremstyle{remark}

\newcommand{\tit}%
  {The universal measure of nonstochastic objects}
\newcommand{\abstr}%
  {The usual definitions of stochasticity
  are very different from Kolmogorov's original definition
  as given in Shen's notes from November 1981.
  This note simplifies Kolmogorov's definition
  and shows that the universal machine generates nonstochastic objects
  with surprisingly high probability.}

\begin{document}
\title{\tit}

\author{Vladimir Vovk}
\maketitle
\begin{abstract}
  \smallskip
  \abstr

  The version of this note at \url{http://gtfp.net} (Working Paper 70)
  is updated most often.
\end{abstract}

\section{Introduction}

The notion of stochastic objects was first introduced
(in the currently recorded history)
in Kolmogorov's talk at a seminar at Moscow State University
on 26 November 1981.
In the same talk he also posed the problem of estimating
the abundance of stochastic and nonstochastic objects.
(Alexander Shen's notes from that seminar have been published
as \cite[note~5.12]{Semenov/etal:2024-full}.)
Various solutions to this problem have been given by
Shen in 1983 \cite{Shen:1983},
Vladimir V'yugin in 1985 \cite{Vyugin:1985},
Andrei Muchnik in 1998 \cite[Theorems~10.10 and~10.11]{Muchnik/etal:1998},
and Shen et al.\ more recently \cite[Theorem~251]{Shen/etal:2017book}.
See \cite[Sect.~14.2]{Shen/etal:2017book} and
\cite[Sect.~2]{Vereshchagin/Shen:2017}
for excellent reviews.

A surprising feature of the existing literature on stochastic objects
is that it always assumes some externally given nested structure on the objects
whose stochasticity we are interested in.
In many cases the objects are the binary strings,
with their lengths playing an important role.
In other cases (e.g., in \cite{Shen:1983}),
the objects are the nonnegative integers with the nested structure
consisting of the sets $\{0,\dots,2^n-1\}$, $n=0,1,\dots$.
Kolmogorov's original problem \cite[Sect.~5.12]{Semenov/etal:2024-full}
seems to have been completely different:
no externally given nested structure is used
in measuring sets of nonstochastic objects.
Kolmogorov's setting will be discussed in the appendix.
This note simplifies it further.
While Kolmogorov's setting still involves a nested structure,
albeit intrinsic,
this note dispenses with it altogether.

The standard problem of measuring the set of all $(\alpha,\beta)$-nonstochastic objects
(where $\alpha$ and $\beta$ are parameters,
to be defined in Sect.~\ref{sec:result})
has an important special case:
do such objects even exist in a given level of the nested structure?
(Such as the question of existence of $(\alpha,\beta)$-nonstochastic strings
of length $n$.)
Important results in this direction are due to Shen \cite[Theorem~2]{Shen:1983}
and G\'acs et al.\ \cite[Theorem~IV.2]{Gacs/etal:2001}.
Such problems do not arise in the setting of this note.

Different authors and publications use different ways
of measuring sets of nonstochastic objects.
Shen \cite[Theorem~3]{Shen:1983} bounds the uniform probability measure
of nonstochastic objects.
Considering the uniform probability measure appears somewhat unnatural,
and \cite[Theorem~4]{Shen:1983} covers non-uniform probability measures;
Shen's Theorem~4, however, only gives a one-sided bound.
V'yugin \cite[Corollary and Theorem~3]{Vyugin:1985} bounds the universal semimeasure
of nonstochastic objects.
The existing results that are closest to the main result of this note
are Theorem~10.11 in \cite{Muchnik/etal:1998}
and Theorem~251 in \cite[Sect.~14.2]{Shen/etal:2017book};
both these results involve a different version of the universal semimeasure.
In the last three of these results,
those in \cite{Vyugin:1985,Muchnik/etal:1998,Shen/etal:2017book},
the bounds on the measure of the $(\alpha,\beta)$-nonstochastic objects
are governed by the term $2^{-\alpha}$.
Kolmogorov's setting described in the appendix is very different,
but we can still expect a similar prominent role for $2^{-\alpha}$.
The nature of the main result of this note is even more different,
and $2^{-\alpha}$ disappears.

Let $\N:=\{1,2,\dots\}$ be the natural numbers,
$[m]:=\{1,\dots,m\}$ for each $m\in\N$,
$\log$ be binary logarithm,
$C$ be plain Kolmogorov complexity
\cite[Sect.~1.1]{Shen/etal:2017book},
$K$ be prefix complexity
\cite[Sect.~4.3]{Shen/etal:2017book},
and $m$ be a fixed universal semimeasure on $\N$
\cite[Sect.~4.2]{Shen/etal:2017book}.
For a subset $A\subseteq\N$,
we set $m(A):=\sum_{a\in A}m(a)$
(therefore, $m(\{a\})=m(a)$ for all $a\in\N$).
This makes $m$ a measure, so let us also refer to it as the \emph{universal measure}.

For a brief summary of the main results about finitary randomness,
including connections with universal p-values and e-values,
see \cite[Appendix~A]{Vovk:2025ML}.

\section{Main result}
\label{sec:result}

Let us refer to elements of $\N$ as objects
(we can replace $\N$ by any set in computable bijection with $\N$,
such as the binary strings $\{0,1\}^*$).
An object $\omega\in\N$ is \emph{$(\alpha,\beta)$-stochastic},
where $\alpha$ and $\beta$ are nonnegative integers,
if there exists a finite set $\Omega\subseteq\N$ for which
\begin{itemize}
\item
  $\omega\in\Omega$;
\item
  $K(\Omega)\le\alpha$;
\item
  $K(\omega\mid\Omega) \ge \log\left|\Omega\right| - \beta$.
\end{itemize}
Otherwise $\omega$ is \emph{$(\alpha,\beta)$-nonstochastic}.
In our notation we follow Kolmogorov \cite[Sect.~5.12]{Semenov/etal:2024-full};
for differences from Kolmogorov's setting, see the appendix.

Let $\NNN_{\alpha,\beta}\subseteq\N$ be the set of all $(\alpha,\beta)$-nonstochastic objects.
We are interested in the asymptotics of $m(\NNN_{\alpha,\beta})$ as $\alpha,\beta\to\infty$.
Let
\[
  \tau(t)
  :=
  m(\{t+1,t+2,\dots\})
\]
be the tail of $m$.

\begin{theorem}\label{thm:main}
  There exists $c>1$ such that,
  for all $\alpha$ and $\beta\le2^{2^{\alpha}}$,
  \begin{equation}\label{eq:main}
    \frac{\tau(\alpha)}{c}
    \le
    m(\NNN_{\alpha,\beta})
    \le
    c\tau(\alpha).
  \end{equation}
\end{theorem}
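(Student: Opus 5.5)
The plan is to prove the two inequalities in \eqref{eq:main} separately. The upper bound will come from a direct inclusion: every $\omega\le\alpha$ is stochastic. The lower bound will come from a construction that maps the tail mass of $m$ beyond $\alpha$ onto nonstochastic objects. Throughout, small $\alpha$ (below some fixed $\alpha_0$) costs only a constant, because $\tau(\alpha_0)>0$ and $m(\N)\le1$; so we may assume $\alpha$ is large.

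\emph{Upper bound.} I will show $\NNN_{\alpha,\beta}\subseteq\{\alpha+1,\alpha+2,\dots\}$ for all large $\alpha$ and all $\beta$. If $\omega\le\alpha$, take $\Omega:=\{\omega\}$. Then $K(\Omega)\le K(\omega)+O(1)\le 2\log\alpha+O(1)\le\alpha$, and $K(\omega\mid\Omega)\ge0=\log|\Omega|-0$. So $\omega$ is $(\alpha,\beta)$-stochastic, and hence $m(\NNN_{\alpha,\beta})\le\tau(\alpha)$.

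\emph{Lower bound, reduction.} First I will check that $\tau$ is stable under constant shifts: $\tau(\alpha)\le c_1\tau(\alpha+d)$ for each fixed $d$. This holds because $m(\alpha+i)\le c\,m(\alpha+d+1)$ for $i\le d$, since $K(\alpha+i)=K(\alpha+d+1)+O(1)$. It therefore suffices to find a computable-from-$t$ family of objects with the following properties.
\begin{itemize}
\item Each $t>\alpha+d$ carries an object that is $(\alpha,\beta)$-nonstochastic for all $\beta\le2^{2^\alpha}$.
\item Distinct $t$ give objects whose combined $m$-weight is at least a constant fraction of $\sum_{t>\alpha+d}m(t)$.
\end{itemize}
The natural candidate is a lower semicomputable semimeasure $\mu$ concentrated on nonstochastic objects, with $\mu(\NNN_{\alpha,\beta})\ge\tau(\alpha+d)/c$. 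Universality then gives $m\ge\mu/c'$, which finishes the lower bound.

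\emph{Lower bound, construction.} For each $\alpha$, set $n:=2^{2^{\alpha}+\alpha+3}$, so that $n$ exceeds $\beta+2\alpha$ by a wide margin. Run the enumeration $m_s\uparrow m$, and simultaneously run all programs of length $\le\alpha$ that output finite sets. At stage $s$, let $x_{\alpha,s}$ be the first string of length $n$ outside every set $\Omega$ with $|\Omega|\le2^{n-\alpha-\beta-2}$ produced so far. There are fewer than $2^{\alpha+1}$ such sets, so their union has size below $2^{n-1}$, and $x_{\alpha,s}$ exists. Whenever the current tail $\sum_{t>\alpha}m_s(t)$ increases, assign the increment to $x_{\alpha,s}$ in $\mu$; summing over $\alpha$ with weights $2^{-K(\alpha)}$ keeps $\mu$ a semimeasure.

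A candidate that survives all halting programs of length $\le\alpha$ is nonstochastic, by the standard argument. Sets $\Omega$ with $K(\Omega)\le\alpha$ and small $|\Omega|$ miss $x$ by construction. For large $|\Omega|$, the bound $K(x\mid\Omega)\le K(x)+O(1)$ is small relative to $\log|\Omega|-\beta$, because $x$ is described by $\alpha$ together with the stage. The second bullet in the reduction would be discharged exactly by the charging estimate described next.

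\emph{Main obstacle.} The hard step is ensuring that the mass assigned to candidates \emph{later killed} (covered by a newly halting short program) does not eat up the tail. I would control this with a charging argument. Each kill event is caused by the halting of a program $p$ with $|p|\le\alpha$. Mass assigned before the last such halting is therefore bounded in terms of how much of the tail is enumerated before the halting stage. I would compare this with $m$-mass of objects of complexity about $\alpha$, using that the halting time of the last program of length $\le\alpha$ is computable from about $\alpha$ bits. My hope is that this contribution is $O(2^{-\alpha})$, while $\tau(\alpha)\gtrsim 2^{-K(\alpha)}$ is much larger, or can be absorbed by the shift by $d$. This is where I am least sure the bookkeeping closes. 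If it fails, I would switch to indexing candidates by $t$ itself, with $t$ serving as a time bound, and show that most of the tail mass beyond $\alpha$ lies on $t$ exceeding the relevant halting time.
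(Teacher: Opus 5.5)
Your upper bound is correct and is the paper's argument. The lower bound, however, has a gap that more careful bookkeeping cannot close.

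\textbf{The construction cannot carry enough mass.} The first bullet of your reduction is unattainable as stated. An object computed from $t$ has complexity at most $K(t)+O(1)$, so for the many $t>\alpha+d$ with $K(t)<\alpha-O(1)$ it is stochastic via a singleton. More generally, your own upper-bound argument gives $K(\omega)\ge\alpha-O(1)$, hence $m(\omega)=O(2^{-\alpha})$, for every $\omega\in\NNN_{\alpha,\beta}$. Meanwhile $\tau(\alpha)\ge m(\alpha+1)\ge2^{-2\log\alpha-O(1)}$. So the required mass must be spread over roughly $2^{\alpha}\tau(\alpha)$ distinct nonstochastic objects.

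Your construction certifies only the final survivor $x_\alpha$. Killed candidates lie in a small set of complexity $\le\alpha$, which may well be the singleton, so they can be stochastic. Since $\mu_\alpha$ is lower semicomputable uniformly in $\alpha$, we get $\mu_\alpha(x_\alpha)\le2^{K(\alpha)+O(1)}m(x_\alpha)\le2^{-\alpha+K(\alpha)+O(1)}$. Thus essentially all of $\tau(\alpha)$ goes to killed candidates, the reverse of what you hoped.

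The fallback fails as well. A $t$ beyond the halting time of all programs of length $\le\alpha$ determines, with $O(\log\alpha)$ extra bits, the first $\alpha-O(1)$ bits of the halting probability. This forces the $m$-mass of all such $t$ to be only $2^{-\alpha+O(\log\alpha)}$.

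Finally, mixing with weights $2^{-K(\alpha)}$ loses an unbounded factor even if each $\mu_\alpha$ worked. Your $\mu_\alpha$ have disjoint supports and $\sum_\alpha\tau(\alpha)=\sum_t t\,m(t)=\infty$. So any per-$\alpha$ construction must suffer some unbounded loss, and you need one semimeasure serving all $\alpha$ at once.

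\textbf{The missing idea.} It is Lemma~\ref{lem:KK}. If a finite set witnesses the $(\alpha,\beta)$-stochasticity of $\omega$, then $K(\omega)$ lies within $\alpha+\beta+O(1)$ of the logarithm of that set's size. Hence $K(K(\omega))\le\alpha+2\log(\alpha+\beta+2)+O(1)$.

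It follows (Corollary~\ref{cor:plentiful}) that for every $n$ with $K(n)>\alpha'=\alpha+O(\log(\alpha+\beta+2))$, each $\omega\in[2^n]$ with $K(\omega)\ge n$ is nonstochastic; these are at least half of $[2^n]$. The reason is that $n$ is recoverable from $K(\omega)$ together with the small offset $K(\omega)-n\in[0,K(n)+O(1)]$.

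The single semimeasure $\sum_n m(n)\,\mathrm{Unif}([2^n])$ is dominated by $m$ (Lemma~\ref{lem:smudging}). It spreads mass uniformly over exponentially many objects and works for every $\alpha$ simultaneously, giving $m(\NNN_{\alpha,\beta})\ge c\,m(\{n:K(n)>\alpha'\})\ge c\,\tau(\alpha')$. The mass comes from $n$ of high \emph{complexity}, not from $n$ beyond a halting time.

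Because $\alpha'$ grows like $\log\beta$, stability of $\tau$ under constant shifts is not enough. The paper proves $\tau(2^t)\ge c\,\tau(t)$ (Lemma~\ref{lem:slow}) and applies it twice. This is exactly where the hypothesis $\beta\le2^{2^\alpha}$ is used.
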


More generally, let us define $\tower_k:[0,\infty)\to[0,\infty)$
recursively by
\[
  \tower_i(\alpha)
  :=
  \begin{cases}
    \alpha & \text{if $i=0$}\\
    2^{\tower_{i-1}(\alpha)} & \text{for $i=1,2,\dots$}.
  \end{cases}
\]
The condition $\beta\le\tower_2(\alpha)$ in the statement of the theorem 
can be replaced by $\beta\le\tower_k(\alpha)$ for a fixed $k$
(``fixed'' in the sense of $c$ depending on $k$).

The theorem says that
$m(\NNN_{\alpha,\beta})=\Theta(\tau(\alpha))$
under mild conditions on $\beta$.
Informally, nonstochastic objects are generated with probability
that decays extremely slowly.
The function $n\mapsto\tau(n)$ monotonically decreases to 0
but does so more slowly than any computable function monotonically decreasing to 0,
such as $1/\log\log n$ (and we can take any number of logarithms).
Let us state this observation formally.

\begin{lemma}
  Let $f:\N\to(0,\infty)$ be a computable monotonically decreasing function
  such that $f(n)\to0$ as $n\to\infty$.
  Then $\tau(n)/f(n)\to\infty$ as $n\to\infty$.
\end{lemma}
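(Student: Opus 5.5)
The plan is to construct a lower semicomputable semimeasure that puts mass proportional to $f(n_k)-f(n_{k+1})$ far out in the tail, and then use universality of $m$. Fix an arbitrary constant $A>0$; I will show that $\tau(n)\ge A f(n)$ for all sufficiently large $n$. Since $m$ dominates every lower semicomputable semimeasure up to a multiplicative constant, the task reduces to finding a computable semimeasure $\mu$ on $\N$ whose tail satisfies $\mu(\{n+1,n+2,\dots\})\ge A' f(n)$ eventually. This alone gives only $\tau(n)\ge c_\mu A' f(n)$, with a constant that depends on $\mu$. To get divergence of the ratio, I will instead build a single $\mu$ whose tail exceeds $g(n)f(n)$, where $g$ is some computable function tending to infinity.

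The construction goes as follows. Since $f$ is computable, decreasing and tends to $0$, we can computably find indices $n_1<n_2<\cdots$ with $f(n_k)\le 4^{-k}$; for instance, search for the least $n$ with $f(n)<4^{-k}$, using a rational approximation with some slack. Put mass $\mu(n_{k+1}) := 2^{k} f(n_k)$ on the point $n_{k+1}$ (or spread it over points beyond $n_k$). Then $\sum_k 2^k f(n_k)\le\sum_k 2^{-k}\le 1$, so $\mu$ is a computable semimeasure; if $f$ is only computable in the real-valued sense, take lower rational approximations to keep $\mu$ lower semicomputable. For $n_k\le n<n_{k+1}$ we then have
\[
  \mu(\{n+1,n+2,\dots\})\ge\mu(n_{k+1})=2^k f(n_k)\ge 2^k f(n),
\]
because $f$ is decreasing and $n\ge n_k$. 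Universality gives $\tau(n)\ge c\,2^k f(n)$, and $k\to\infty$ as $n\to\infty$, so $\tau(n)/f(n)\to\infty$.

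The main point needing care is ensuring that $\mu$ is genuinely lower semicomputable with total mass at most $1$ when $f$ is real-valued. I would handle this by working with computable rational upper bounds $q_k\ge f(n_k)$ with $q_k\le 2\cdot4^{-k}$, and putting mass $2^k f(n_k)$ approximated from below, or simply $2^k\cdot\min(q_k,\dots)$ where the sum is controlled. Any fixed multiplicative slack only changes constants, which does not affect the conclusion because the ratio $2^k$ is unbounded. Positivity of $f$ is not essential here; it only ensures the ratio is defined.
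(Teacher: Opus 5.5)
Your proof is correct, but it takes a different route from the paper's.

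\textbf{The paper's route.} It first reduces the lemma to a pointwise statement about series. Any computable positive series $\sum_n a_n\le1$ with computable tails (here, the suitably normalized increments of $f$) can be turned into a computable series $\sum_n a'_n\le1$ with $a'_n/a_n\to\infty$. The construction zeroes an initial block $I_0$ to free a budget $c=\sum_k c_k$. It then repeatedly chooses a block $I_k$ beyond which the current tail is below $c_k$ and doubles everything past it. The result is $a'_n=2^{k-1}a_n$ on $I_k$.

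\textbf{Your route.} You work with tails directly. Choosing computable $n_1<n_2<\cdots$ with $f(n_k)\le4^{-k}$ lets the geometric decay of $f$ along the subsequence pay for the amplification factor $2^k$, so no separate budget has to be freed. Monotonicity of $f$ then gives $\mu(\{n+1,n+2,\dots\})\ge2^kf(n)$ for $n_k\le n<n_{k+1}$, which is exactly what the tail $\tau$ needs.

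\textbf{Comparison.} Your argument is shorter and avoids both the budget-freeing trick and the intermediate series with computable tails. It uses only that $f$ is computable, decreasing, and tends to $0$. The paper's argument proves more: the pointwise domination $m(n)/a_n\to\infty$, from which the tail statement follows by summation.

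\textbf{A simplification.} The rational bounds $q_k$ in your last paragraph are unnecessary. Since $2^kf(n_k)$ is a computable real uniformly in $k$, $\mu$ is computable and hence lower semicomputable. Moreover, $\sum_k2^kf(n_k)\le\sum_k2^{-k}=1$ already holds for the exact values.
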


\begin{proof}
  It suffices to prove that any computable positive series $\sum_{n=1}^{\infty}a_n\le1$
  with computable tails
  can be transformed into a computable nonnegative series $\sum_{n=1}^{\infty}a'_n\le1$
  such that $a'_n/a_n\to\infty$.
  (This is true even if ``monotonically decreasing'' is understood in the wide sense
  allowing intervals of constancy.)
  To obtain $a'_n$,
  we will divide the natural numbers $\N$ into disjoint finite non-empty intervals
  denoted by $I_0,I_1,\dots$ in the ascending order
  and modify $a_n$ for $n\in I_k$ sequentially for $k=0,1,\dots$.
  Start with defining $I_0$ arbitrarily and setting
  \[
    a'_n
    :=
    \begin{cases}
      0 & \text{if $n\in I_0$}\\
      a_n & \text{otherwise}.
    \end{cases}
  \]
  Divide the ``overall budget'' $c:=\sum_{n\in I_0}a_n$
  into a positive series $c=\sum_{k\in\N}c_k$.
  Take $I_1$ so large that $\sum_{n\in I_2\cup I_3\cup\dots}a'_n<c_1$
  and redefine $a'_n:=2a'_n$ for all $n\in I_2\cup I_3\cup\dots$.
  Take $I_2$ so large that $\sum_{n\in I_3\cup I_4\cup\dots}a'_n<c_2$
  and redefine $a'_n:=2a'_n$ for all $n\in I_3\cup I_4\cup\dots$, etc.
  It is clear that for each $n$ the value of $a'_n$ will eventually stabilize
  and that the resulting series will satisfy $a'_n/a_n\to\infty$ as $n\to\infty$.
\end{proof}

Theorem~\ref{thm:main} continues to hold when prefix complexity $K$
is replaced by plain Kolmogorov complexity $C$.
Let us call the statement about $C$ the \emph{$C$-version} of Theorem~\ref{thm:main};
let us also apply the expression ``$C$-version'' in the analogous sense
to other statements in the rest of the note.

\section{Proof of Theorem~\ref{thm:main}}

In this proof, $c_1,c_2,\dots$ stand for absolute positive constants.
The $C$-version of the theorem will be treated in parallel.

Let us start with the easy upper bound in \eqref{eq:main}.
Every $\omega\in\NNN_{\alpha,\beta}$ satisfies $K(\omega)\ge\alpha-\newc$
(otherwise $\Omega:=\{\omega\}$ would have witnessed its $(\alpha,0)$-stochasticity).
Therefore, every $\omega\in\NNN_{\alpha,0}$ satisfies $2\log\omega\ge\alpha-\newc$,
which implies
\begin{equation}\label{eq:crude}
  \omega \ge 2^{\alpha/2-\oldc/2} \ge \alpha+1
\end{equation}
for all $\alpha\ge\newc$.
This implies
\[
  m(\NNN_{\alpha,\beta})
  \le
  m(\NNN_{\alpha,0})
  \le
  m(\{\alpha+1,\alpha+2,\dots\})
  =
  \tau(\alpha)
\]
from $\alpha=\oldc$ on.
(This is sufficient since the remaining finitely many $\alpha$s
can be absorbed into $c$.)

It remains to prove the lower bound.
Let us first establish the great stability of $\tau$;
it hardly changes over huge regions of its domain.
(This is why we will be able to get away with the extremely crude
second inequality in \eqref{eq:crude}.)
For that, we will need the following lemma,
which, roughly, bounds $m$ from below by uniform distributions.

\begin{lemma}\label{lem:smudging}
  For every $A\subseteq\N$,
  \[
    m(A)
    \ge
    \newc
    \sum_{n\in\N}
    m(n)
    2^{-n}
    \left|
      A\cap[2^n]
    \right|.
  \]
\end{lemma}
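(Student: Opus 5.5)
Define the function
\[
  \mu(a) := \sum_{n\in\N:\,a\le 2^n} m(n)\,2^{-n}, \qquad a\in\N .
\]
The plan is to show that $\mu$ is a lower semicomputable semimeasure on $\N$. Universality of $m$ then gives $m(a)\ge c\,\mu(a)$ for some constant $c>0$ and all $a$. Summing over $a\in A$ and exchanging the order of summation, which is harmless because all terms are nonnegative, yields
\[
  m(A)\ge c\sum_{a\in A}\mu(a)
  = c\sum_{n\in\N} m(n)\,2^{-n}\,|A\cap[2^n]|,
\]
which is exactly the claimed inequality. Intuitively, $\mu$ is the mixture, with weights $m(n)$, of the uniform distributions on $[2^n]$.

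The first step is to check that $\mu$ is a semimeasure. Exchanging the order of summation,
\[
  \sum_{a\in\N}\mu(a)=\sum_{n\in\N} m(n)\,2^{-n}\,|[2^n]| = \sum_{n} m(n)\le 1 .
\]

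The second step is lower semicomputability. The universal semimeasure $m$ is lower semicomputable, so there is a computable, nondecreasing-in-$t$ family of rational approximations $m_t(n)\uparrow m(n)$, with $m_t(n)=0$ for $n>t$. Set
\[
  \mu_t(a):=\sum_{n\le t:\,2^n\ge a} m_t(n)\,2^{-n}.
\]
This is a finite sum of rationals, hence computable, and it is nondecreasing in $t$. By monotone convergence it converges to $\mu(a)$. So $\mu$ is lower semicomputable.

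The third step invokes universality of $m$ over lower semicomputable semimeasures on $\N$, which gives the constant $c$ (this is the constant written with the paper's counter macro in the statement). The only point needing any care, and it is mild, is lower semicomputability. Here the inner sum is over infinitely many $n$, but only finitely many $n$ are admitted at each stage, so the approximations remain computable. Apart from this, the argument is the standard "mixture of semimeasures is dominated by $m$" argument, and I expect no real obstacle.

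For the $C$-version nothing changes, since the lemma concerns only $m$.
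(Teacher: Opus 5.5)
Your proposal is correct and takes the same approach as the paper. The paper's one-line proof says that $m$ dominates the mixture, with weights $m(n)$, of the uniform distributions on $[2^n]$. Your $\mu$ is exactly that mixture, and you correctly carry out the semimeasure and lower-semicomputability checks and the final exchange of summation, which the paper leaves implicit.
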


\newcounter{smudging}
\setcounter{smudging}{\value{cnumber}}

\begin{proof}
  Being the universal measure,
  $m$ dominates the mixture of the uniform probability measures on $[2^n]$
  taken with the weights $m(n)$.
\end{proof}

Now we can establish the stability of $\tau$.

\begin{lemma}\label{lem:slow}
  For every $t\in\N$,
  $\tau(t)\ge\newc\tau(\lceil\log t\rceil)$;
  in particular, $\oldc\tau(t)\le\tau(2^t)\le\tau(t)$.
\end{lemma}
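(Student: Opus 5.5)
We apply Lemma~\ref{lem:smudging} with $A:=\{t+1,t+2,\dots\}$, the set whose measure is $\tau(t)$. For any $n$ with $2^n\ge 2t$, at least half of $[2^n]$ lies in $A$:
\[
  \left|A\cap[2^n]\right| = 2^n - t \ge 2^{n-1}.
\]
Substituting this into Lemma~\ref{lem:smudging} and keeping only the terms with $2^n\ge2t$ gives
\[
  \tau(t) = m(A) \ge \frac{c_{\arabic{smudging}}}{2}\sum_{n:\,2^n\ge 2t} m(n)
  = \frac{c_{\arabic{smudging}}}{2}\, m\bigl(\{n : n\ge \lceil\log t\rceil+1\}\bigr).
\]
The condition $2^n\ge 2t$ is equivalent to $n\ge \log t+1$, hence to $n\ge\lceil\log t\rceil+1$. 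The right-hand set is exactly $\{\lceil\log t\rceil+1,\lceil\log t\rceil+2,\dots\}$, whose measure is $\tau(\lceil\log t\rceil)$. So the first claim holds with the new constant equal to $c_{\arabic{smudging}}/2$.

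For the ``in particular'' part, the inequality $\tau(2^t)\le\tau(t)$ is immediate because $\tau$ is nonincreasing and $2^t\ge t$. For the other inequality, apply the first claim with $2^t$ in place of $t$. Since $\lceil\log 2^t\rceil=t$, this gives $\tau(2^t)\ge\oldc\,\tau(t)$, with the same constant.

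The only point needing care is the boundary case, which requires checking that the counting estimate gives the index $\lceil\log t\rceil$ exactly and not $\lceil\log t\rceil+1$. I expect no real obstacle here. The case $t=1$, where $\lceil\log t\rceil=0$, is covered by the same computation: the inequality $2^n-1\ge2^{n-1}$ holds for every $n\ge1$, and $\tau(0)=m(\N)$.
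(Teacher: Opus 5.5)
Your proof is correct and is essentially the paper's argument. You apply Lemma~\ref{lem:smudging} to $A=\{t+1,t+2,\dots\}$ and keep only the terms with $n\ge\lceil\log t\rceil+1$, where $2^n-t\ge2^{n-1}$, which yields half the constant of that lemma. Your derivation of the ``in particular'' part, via $\lceil\log 2^t\rceil=t$ and the monotonicity of $\tau$, fills in a step the paper leaves implicit.
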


\newcounter{slow}
\setcounter{slow}{\value{cnumber}}

\begin{proof}
  Let us apply Lemma~\ref{lem:smudging} to $A:=\{t+1,t+2,\dots\}$
  with $\left|A\cap[2^n]\right|=(2^n-t)^+$.
  For $n\ge\lceil\log t\rceil+1$, we have $2^n\ge2t$, and therefore, $2^n-t\ge2^{n-1}$;
  this gives
  \[
    \tau(t)
    \ge
    c_{\arabic{smudging}}
    \sum_{n=\lceil\log t\rceil+1}^{\infty}
    m(n)2^{-n}2^{n-1}
    =
    \frac{c_{\arabic{smudging}}}{2}
    \tau(\lceil\log t\rceil).
    \qedhere
  \]
\end{proof}

The next key observation (cf.\ \cite[Lemmas~III.8 and~III.10]{Gacs/etal:2001})
is that the complexity of stochastic objects is simple.

\begin{lemma}\label{lem:KK}
  Every $(\alpha,\beta)$-stochastic $\omega\in\N$ satisfies
  \[
    K(K(\omega))
    \le
    \alpha + 2\log(\alpha+\beta+2)+\newc.
  \]
\end{lemma}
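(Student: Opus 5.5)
Fix an $(\alpha,\beta)$-stochastic $\omega$ and a witnessing finite set $\Omega\ni\omega$ with $K(\Omega)\le\alpha$ and $K(\omega\mid\Omega)\ge\log|\Omega|-\beta$. The plan is to show that $K(\omega)$ is determined, up to a small additive error, by $\Omega$ and a few short numbers. Knowing $K(\omega)$ exactly from $\Omega$ is out of reach, so instead I will describe $K(\omega)$ as the output of a short search that uses $\Omega$ together with auxiliary integers of size $O(\log(\alpha+\beta))$.

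The first step is to sandwich $K(\omega)$ near $K(\Omega)+\log|\Omega|$. For the upper bound, $\omega$ is specified by a shortest program for $\Omega$ plus its index in $\Omega$, which gives
\[
K(\omega)\le K(\Omega)+\log|\Omega|+2\log\log|\Omega|+c .
\]
For the lower bound, the symmetry of information (or simply $K(\omega)\ge K(\omega,\Omega)-K(\Omega\mid\omega,K(\omega))-c$) is awkward here. A cleaner choice is the chain rule $K(\omega,\Omega)=K(\Omega)+K(\omega\mid\Omega,K(\Omega))+O(1)$ combined with $K(\omega)\ge K(\omega,\Omega)-K(\Omega\mid\omega)$. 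But $K(\Omega\mid\omega)$ could be as large as $\alpha$, so this is acceptable. In the end $K(\omega)$ lies in an interval around $\log|\Omega|$ of length $O(\alpha+\beta+\log\log|\Omega|)$.

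The second step removes the unwanted $\log\log|\Omega|$ dependence. The usual trick (following G\'acs et al., Lemmas~III.8 and III.10) is to describe $K(\omega)$ directly rather than through an interval. Given $\Omega$ (at cost $\alpha$), I will enumerate programs of length at most $\log|\Omega|+\alpha+c$ and define $k_\Omega$ as the number of elements of $\Omega$ that receive a description of length at most $\log|\Omega|-\beta'$, for a suitable threshold. More simply, I will encode $K(\omega)$ as $d:=K(\omega)-\lceil\log|\Omega|\rceil$, which can be recovered from $\Omega$. By the first step $d$ is an integer with $|d|\le\alpha+\beta+O(\log\log|\Omega|)$, which would cost $2\log|d|$ bits.

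The main obstacle is that the $\log\log|\Omega|$ term spoils the bound. To handle it, I would instead use the lower bound for $K(\omega)$ in the form $K(\omega)\ge \log|\Omega|-\beta-c$, obtained from $K(\omega)\ge K(\omega\mid\Omega)-c$. For the upper bound I would use $K(\omega)\le K(\omega\mid\Omega)+K(\Omega)+c\le K(\omega\mid\Omega)+\alpha+c$, and bound $K(\omega\mid\Omega)\le\log|\Omega|+K(\lceil\log|\Omega|\rceil\mid\Omega)+c=\log|\Omega|+c$, since $|\Omega|$ is computable from $\Omega$. This gives $d\in[-\beta-c,\alpha+c]$. Then $K(K(\omega))\le K(\Omega)+K(d)+c\le\alpha+2\log(\alpha+\beta+2)+c$, where $d$ is coded by a self-delimiting code of length about $2\log(|d|+2)$. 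The step I will check most carefully is the prefix-free combination of the program for $\Omega$ with the code for $d$. Because $K(\Omega)$ is itself prefix-free, simple concatenation suffices, and the only remaining overhead is the additive constant $c$.
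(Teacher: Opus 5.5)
Your final paragraph is correct and is essentially the paper's proof. You sandwich $K(\omega)$ between $\log|\Omega|-\beta-c$ (via $K(\omega)\ge K(\omega\mid\Omega)-c$) and $\log|\Omega|+\alpha+c$ (via $K(\omega)\le K(\omega\mid\Omega)+K(\Omega)+c$), and then describe $K(\omega)$ by a shortest program for $\Omega$ followed by a self-delimiting code for the offset. The earlier detours (the $2\log\log|\Omega|$ index bound, the chain-rule lower bound, and the $k_\Omega$ enumeration) are unnecessary and should be dropped, since knowing $\Omega$ already gives $|\Omega|$.
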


\newcounter{KK}
\setcounter{KK}{\value{cnumber}}

\begin{proof}
  The proof uses the fact that $K(\omega)\approx\log\left|\Omega\right|$:
  \begin{align}
    K(\omega)
    &\le
    K(\omega\mid\Omega)+K(\Omega)+\newc
    \le
    \log\left|\Omega\right|+\alpha+\newc
    \label{eq:KK-1}\\
    K(\omega)
    &\ge
    K(\omega\mid\Omega)-\newc
    \ge
    \log\left|\Omega\right|-\beta-\oldc,
    \notag
  \end{align}
  where $\Omega$ witnesses the $(\alpha,\beta)$-stochasticity of $\omega$.
  Since $\log\left|\Omega\right|$ is computable given $\Omega$,
  \[
    K(K(\omega))
    \le
    K(\Omega)+2\log(\alpha+\beta+2)+\newc
    \le
    \alpha+2\log(\alpha+\beta+2)+c_{\arabic{KK}}.
    \qedhere
  \]
\end{proof}

Lemma~\ref{lem:KK} shows that nonstochastic objects $\omega$ are plentiful:
it is enough for $\omega$ to live at a complex complexity level.

\begin{corollary}\label{cor:plentiful}
  Set $\alpha':=\lceil\alpha+\newc\log(\alpha+\beta+2)\rceil$
  and let $n\in\N$ be such that $K(n)>\alpha'$.
  Then at least $2^{n-1}$ of the numbers in $[2^n]$ are $(\alpha,\beta)$-nonstochastic.
\end{corollary}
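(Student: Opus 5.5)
We argue by contraposition using Lemma~\ref{lem:KK}. Suppose $\omega\in[2^n]$ is $(\alpha,\beta)$-stochastic. Lemma~\ref{lem:KK} gives
\[
  K(K(\omega))\le\alpha+2\log(\alpha+\beta+2)+c_{\arabic{KK}},
\]
and we aim to show that, for most $\omega\in[2^n]$, the number $K(\omega)$ determines $n$ cheaply. If so, then $K(n)\le K(K(\omega))+O(1)$, and this contradicts $K(n)>\alpha'$ once the constant in front of $\log(\alpha+\beta+2)$ in the definition of $\alpha'$ is chosen large enough.

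The key step is counting. Every $\omega\in[2^n]$ satisfies $K(\omega)\le n+K(n)+O(1)$. Conversely, at least $2^{n-1}$ of them satisfy $K(\omega)\ge n-1$, since fewer than $2^{n-1}$ objects have complexity below $n-1$. For these $\omega$ we have $n\le K(\omega)+1$. The difficulty is the other direction: $K(\omega)$ may exceed $n$ by up to about $K(n)$, so knowing $K(\omega)$ does not directly pin down $n$.

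My plan is to use the standard refinement that most $\omega\in[2^n]$ satisfy $K(\omega)\ge n+K(n)-O(1)$. More precisely, I would use the counting form: the number of $\omega\in[2^n]$ with $K(\omega)<n+K(n)-d$ is at most $2^{n-d+O(1)}$. This follows from the formula $K(\omega)=K(n,\omega)+O(1)$ for $\omega\in[2^n]$ together with symmetry of information, or equivalently from the bound $\sum_{\omega\in[2^n]}2^{-K(\omega)}\lesssim 2^{-K(n)}$. Choosing $d$ constant, at least $2^{n-1}$ of the $\omega\in[2^n]$ satisfy
\[
  n+K(n)-d\le K(\omega)\le n+K(n)+O(1).
\]
So $k:=K(\omega)$ equals $n+K(n)$ up to an additive constant.

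From $k$ we recover $n$ with $O(1)$ extra information as follows. Given $k$ and the exact offset $j:=k-n-K(n)$ (a constant-size advice), enumerate all pairs $(n',p)$ with $p$ a shortest program for $n'$ appearing in the enumeration, and find $n'$ with $n'+K(n')=k-j$. The problem is that $K(n')$ is only upper semicomputable, so this search is not effective. Instead I would use the fact that $n$ is recoverable from $k$ and $K(n)$ up to constants, which yields $K(n)\le K(k)+O(\log K(n))$. That reintroduces a logarithmic loss.

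The main obstacle is this decoding of $n$ from $K(\omega)$. I expect the clean route is to note that $K(n)\le 2\log n+O(1)$. Hence $n=k-K(n)+O(1)$ is determined by $k$ together with the number $K(n)+j$. This number is at most $2\log k+O(1)$ and can be described with $O(\log\log k)$ bits, or it can simply be absorbed into the same kind of $\log$ term that $\alpha'$ already allows. If needed, I would strengthen the constant in $\alpha'$, since the statement leaves it free.

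Combining the steps: if $\omega$ is one of these $2^{n-1}$ numbers and is stochastic, then
\[
  K(n)\le\alpha+2\log(\alpha+\beta+2)+O(\log\text{stuff})+O(1).
\]
I would then check that the extra term is at most $c\log(\alpha+\beta+2)$, possibly using $K(n)>\alpha'$ to bound $n$ in terms of $\alpha$ within the range considered. This contradicts $K(n)>\alpha'$.
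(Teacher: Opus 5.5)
Your skeleton is the paper's: contrapose Lemma~\ref{lem:KK} by showing that, for at least $2^{n-1}$ of the $\omega\in[2^n]$, the value $K(\omega)$ determines $n$ up to $O(\log K(n))$ extra bits. But your route has a superfluous, wrongly justified step, and the closing step is not done correctly.

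\emph{The detour.} The refinement $K(\omega)\ge n+K(n)-O(1)$ is not needed, and your derivation of it is wrong. The set $[2^n]=\{1,\dots,2^n\}$ contains small numbers. So $K(\omega)=K(n,\omega)+O(1)$ fails, for instance at $\omega=1$. Likewise $\sum_{\omega\in[2^n]}2^{-K(\omega)}\ge2^{-K(1)}$ is not $O(2^{-K(n)})$. The count itself can be rescued by working on $(2^{n-1},2^n]$, where $\omega$ does determine $n$, and topping up from $(2^{n-2},2^{n-1}]$. The logarithmic loss you were trying to avoid is harmless, and you already had every ingredient for the paper's proof:
\begin{itemize}
\item at least $2^{n-1}$ of the $\omega$ have $K(\omega)\ge n-1$ (the paper uses $K(\omega)\ge n$);
\item every $\omega\in[2^n]$ has $K(\omega)\le n+K(n)+O(1)$;
\item so the offset $d:=K(\omega)-n$ lies in $[-1,K(n)+O(1)]$ and costs $2\log(K(n)+2)+O(1)$ bits;
\item $n$ is computable from $K(\omega)$ and $d$, hence $K(n)\le K(K(\omega))+2\log(K(n)+2)+O(1)$.
\end{itemize}
Your ``$K(n)+j$'' device gives exactly this inequality. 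An offset known to lie near $K(n)$ still costs $\Theta(\log K(n))$ bits to specify, so the refinement buys nothing.

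\emph{The closing step.} As you describe it, it would fail. The option of paying $O(\log\log k)$ bits must be discarded. Take $n=2^{2^m}$ with $K(m)$ slightly above $\alpha'$; then $K(n)=K(m)+O(1)>\alpha'$, while $\log\log n=m\ge2^{\alpha'/2-O(1)}$. So a $\log\log n$ term cannot be absorbed into $c\log(\alpha+\beta+2)$. For the same reason you cannot ``use $K(n)>\alpha'$ to bound $n$ in terms of $\alpha$.'' That hypothesis is a lower bound on $K(n)$ and says nothing about the size of $n$.

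What actually closes the argument is the following. A stochastic $\omega$ would give, via Lemma~\ref{lem:KK}, $K(n)-2\log(K(n)+2)-O(1)\le\alpha+2\log(\alpha+\beta+2)+c$, where $c$ is that lemma's constant. The map $v\mapsto v-2\log(v+2)$ is increasing, so you may substitute $K(n)>\alpha'=\lceil\alpha+c'\log(\alpha+\beta+2)\rceil$. Then use $\log(\alpha'+2)\le\log(\alpha+\beta+2)+\log(c'+2)$ and $\log(\alpha+\beta+2)\ge1$. The left-hand side now exceeds the right once $c'$ is large compared with $c$ and the $O(1)$ terms. This is how the paper finishes, via its case split on whether $\log(K(n)+1)\le\log(\alpha+\beta+2)+O(1)$.
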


\newcounter{plentiful}
\setcounter{plentiful}{\value{cnumber}}

\begin{proof}[Proof of the $C$-version of Corollary~\ref{cor:plentiful}]
  The proof for the $C$-version is simpler and more intuitive,
  so let us start with it.
  At the end of the section we will see how it can be modified
  to cover the original version (the \emph{$K$-version}).
  There we will also check the $C$-version of Lemma~\ref{lem:KK}.  

  Let $\omega\in[2^n]$ satisfy $C(\omega)\ge n-1$
  (there are at least $2^{n-1}$ such $\omega$).
  The difference $n-C(\omega)$ takes one of finitely many values,
  so $C(C(\omega))\ge C(n)-\newc>\alpha'-\oldc$;
  therefore, by the $C$-version of Lemma~\ref{lem:KK},
  $\omega$ is $(\alpha,\beta)$-nonstochastic
  (for a suitable choice of the absolute constants).
\end{proof}

The proof shows that, for such an $n$,
a random number in $[2^n]$ is nonstochastic.
Its obvious model $\Omega:=[2^n]$ costs about $C(n)$ bits,
which exceeds the budget $\alpha$,
and any cheaper model would, by Lemma~\ref{lem:KK},
describe $n$ too cheaply.

Now we can make Corollary~\ref{cor:plentiful} into a lower bound on $m(\NNN_{\alpha,\beta})$.

\begin{corollary}\label{cor:lower}
  For all $\alpha$ and $\beta$,
  \begin{equation}\label{eq:lower}
    m(\NNN_{\alpha,\beta})
    \ge
    \newc
    \tau(\alpha'),
  \end{equation}
  where $\alpha'$ is as defined in Corollary~\ref{cor:plentiful}.
\end{corollary}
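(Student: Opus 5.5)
We apply Lemma~\ref{lem:smudging} to $A:=\NNN_{\alpha,\beta}$ and throw away every term except those with $K(n)>\alpha'$. By Corollary~\ref{cor:plentiful}, for each such $n$ we have $\left|\NNN_{\alpha,\beta}\cap[2^n]\right|\ge2^{n-1}$. Hence
\[
  m(\NNN_{\alpha,\beta})
  \ge
  c_{\arabic{smudging}}
  \sum_{n:\,K(n)>\alpha'}
  m(n)2^{-n}2^{n-1}
  =
  \frac{c_{\arabic{smudging}}}{2}\,
  m(\{n:K(n)>\alpha'\}).
\]
It therefore suffices to show that $m(\{n:K(n)>\alpha'\})\ge c\,\tau(\alpha')$ for some absolute constant $c>0$.

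We compare the set $\{n:K(n)>\alpha'\}$ with the tail $\{\alpha'+1,\alpha'+2,\dots\}$. The complement $\{n:K(n)\le\alpha'\}$ has fewer than $2^{\alpha'+1}$ elements, and each of its elements satisfies $m(n)\ge c'2^{-K(n)}\ge c'2^{-\alpha'}$. This does not immediately bound its measure by a small fraction of the tail, so a different route is cleaner. Every $n>2^{\alpha'+1}$ that is not too large tends to have complexity exceeding $\alpha'$, but to avoid dealing with exceptions directly, we instead use the stability of $\tau$ once more, through Lemma~\ref{lem:smudging}.

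Let $B:=\{n:K(n)>\alpha'\}$. For any $N$ with $2^N\ge 2^{\alpha'+2}$, the interval $[2^N]$ contains at most $2^{\alpha'+1}\le 2^{N-1}$ elements outside $B$, so $\left|B\cap[2^N]\right|\ge2^{N-1}$. Lemma~\ref{lem:smudging} applied to $B$ then gives
\[
  m(B)
  \ge
  \frac{c_{\arabic{smudging}}}{2}
  \sum_{N\ge\alpha'+2} m(N)
  =
  \frac{c_{\arabic{smudging}}}{2}\,
  \tau(\alpha'+1).
\]
To pass from $\tau(\alpha'+1)$ to $\tau(\alpha')$ we use $m(\alpha'+1)\le c''\tau(\alpha'+1)$. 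This holds because the semimeasure $n\mapsto m(n-1)/2$ is lower semicomputable and supported on $\{2,3,\dots\}$, so universality gives $m(n)\ge c\,m(n-1)$. Alternatively, Lemma~\ref{lem:slow} gives $\tau(\alpha'+1)\ge c_{\arabic{slow}}\tau(\lceil\log(\alpha'+1)\rceil)\ge c_{\arabic{slow}}\tau(\alpha')$ for $\alpha'\ge2$, which avoids the shift argument entirely. Combining the two displays yields \eqref{eq:lower}.

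The main obstacle is the step where Corollary~\ref{cor:plentiful} fails to apply. It says nothing about levels $n$ with $K(n)\le\alpha'$, so we cannot lower-bound the tail directly. Applying the smudging Lemma~\ref{lem:smudging} a second time, to $B$, gets around this: it converts ``most integers are complex'' into a lower bound by a tail of $m$. Lemma~\ref{lem:slow} then absorbs the shift in the index.
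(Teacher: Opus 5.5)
Your proof is correct and is essentially the paper's argument: Lemma~\ref{lem:smudging} is applied first to $\NNN_{\alpha,\beta}$ via Corollary~\ref{cor:plentiful}, and then to $\{n:K(n)>\alpha'\}$ by counting the $n$ with $K(n)\le\alpha'$. The only difference is in that count. The paper uses the sharper bound (at most $2^{\alpha'}$ such $n$, by the Kraft inequality), so its sum starts at $k=\alpha'+1$ and yields $\tau(\alpha')$ directly; your bound of $2^{\alpha'+1}$ forces the sum to start at $\alpha'+2$ and needs an extra appeal to Lemma~\ref{lem:slow} (valid, since $\lceil\log(\alpha'+1)\rceil\le\alpha'$), a detour that incidentally gives the $C$-version with $\alpha'$ itself rather than the $\alpha'+1$ the paper settles for.
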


\newcounter{lower}
\setcounter{lower}{\value{cnumber}}

\begin{proof}
  By Corollary~\ref{cor:plentiful}
  (whose $K$-version is proved at the end of the section),
  every $n$ with $K(n)>\alpha'$ satisfies
  $\left|\NNN_{\alpha,\beta}\cap[2^n]\right|\ge2^{n-1}$;
  therefore, Lemma~\ref{lem:smudging} applied to $A:=\NNN_{\alpha,\beta}$ gives
  \[
    m(\NNN_{\alpha,\beta})
    \ge
    c_{\arabic{smudging}}
    \sum_{n:K(n)>\alpha'}
    m(n) 2^{-n}2^{n-1}
    =
    \frac{c_{\arabic{smudging}}}{2}
    m(\{n:K(n)>\alpha'\}).
  \]
  Now let us apply Lemma~\ref{lem:smudging} to $A:=\{n:K(n)>\alpha'\}$.
  At most $2^{\alpha'}$ numbers $n$ have $K(n)\le\alpha'$,
  so for $k\ge\alpha'+1$ we have
  $\left|A\cap[2^k]\right|\ge2^k-2^{\alpha'}\ge2^{k-1}$;
  therefore,
  \[
    m(\{n:K(n)>\alpha'\})
    \ge
    c_{\arabic{smudging}}
    \sum_{k=\alpha'+1}^{\infty}
    m(k)2^{-k}2^{k-1}
    =
    \frac{c_{\arabic{smudging}}}{2}
    \tau(\alpha').
    \qedhere
  \]
\end{proof}

Now we can prove the lower bound in \eqref{eq:main}.
First note that $\beta\le2^{2^{\alpha}}$ implies
\[
  \alpha'
  =
  \lceil
    \alpha
    +
    c_{\arabic{plentiful}}
    \log(\alpha+\beta+2)
  \rceil
  \le
  2^{2^{\alpha}}
\]
for large $\alpha$.
Therefore, by Corollary~\ref{cor:lower}, the monotonicity of $\tau$,
and two applications of Lemma~\ref{lem:slow},
\[
  m(\NNN_{\alpha,\beta})
  \ge
  c_{\arabic{lower}}
  \tau(\alpha')
  \ge
  c_{\arabic{lower}}
  \tau
  \left(
    2^{2^{\alpha}}
  \right)
  \ge
  c_{\arabic{lower}}
  c_{\arabic{slow}}
  \tau(2^{\alpha})
  \ge
  c_{\arabic{lower}}
  c_{\arabic{slow}}^2
  \tau(\alpha).
\]
This can be extended to any fixed $\tower_k$
by applying Lemma~\ref{lem:slow} more times.

Now let us establish the $C$-version of Theorem~\ref{thm:main}.
As the first step, replace all occurrences of $K$ by $C$.
In the proof of the $C$-version of Lemma~\ref{lem:KK}
we should also replace \eqref{eq:KK-1} by
\[
  C(\omega)
  \le
  C(\omega\mid\Omega)+2C(\Omega)+\newc
  \le
  \log\left|\Omega\right|+2\alpha+\newc,
\]
while the rest of the proof still works.

In the $C$-version of Corollary~\ref{cor:lower},
we can only claim that at most $2^{\alpha'+1}$ numbers $n$
have $C(n)\le\alpha'$.
We have to start the summation in the last displayed equation in the proof
from $k=\alpha'+2$,
and so \eqref{eq:lower} continues to hold with $\alpha'$ replaced by $\alpha'+1$.
The derivation of the lower bound in Theorem~\ref{thm:main} still works.

It remains to check the $K$-version of Corollary~\ref{cor:plentiful}.
At most $2^{n-1}$ objects $\omega$ have $K(\omega)\le n-1$,
so at least $2^{n}-2^{n-1}=2^{n-1}$ of the objects $\omega\in[2^n]$
have $K(\omega)\ge n$;
fix such an $\omega$.
Describing $n$ and then reading $n$ further bits
(the binary representation, perhaps with leading 0s, of an element of $[2^n]$
decreased by 1)
is a description mode demonstrating $K(\omega)\le K(n)+n+\newc$,
so the difference $d:=K(\omega)-n$ satisfies $0\le d\le K(n)+\oldc$.
As $n=K(\omega)-d$ is computable from $K(\omega)$ and $d$,
\[
  K(n)
  \le
  K(K(\omega))+K(d)+\newc
  \le
  K(K(\omega))+2\log(K(n)+1)+\newc.
\]
\newcounter{aux}%
\setcounter{aux}{\value{cnumber}}%
Let us make $c_{\arabic{plentiful}}$ large compared
with $c_{\arabic{KK}}$ in Lemma~\ref{lem:KK}.
Since $v\mapsto v-2\log(v+2)$ is increasing for large $v$,
and since $\log(K(n)+1)\le\log(\alpha+\beta+2)+\newc$ may be assumed
(otherwise $K(n)$ is so large that the inequality below is immediate),
we get
\[
  K(K(\omega))
  \ge
  K(n)-2\log(K(n)+1)-c_{\arabic{aux}}
  >
  \alpha+2\log(\alpha+\beta+2)+c_{\arabic{KK}};
\]
therefore,
$\omega$ is nonstochastic by Lemma~\ref{lem:KK}.

\subsection*{Acknowledgments}

Many thanks to Sasha Shen for several discussions
of Kolmogorov's definition of stochastic objects.
I acknowledge the use of Claude Opus 5 in exploring proof ideas,
which I reviewed carefully,
and in checking the note.
I take full responsibility for this note's claims and statements,
including mathematical statements and their proofs.

\appendix

\section{Kolmogorov's problems}

In the next paragraph I reproduce Kolmogorov's problems
given in his talk on 26 November 1981
and briefly recorded by Shen \cite[Sect.~5.12]{Semenov/etal:2024-full}.

A finite object $\omega$ is $(\alpha,\beta)$-stochastic
if there exists a finite set $\Omega$ such that
\begin{itemize}
\item
  $\omega\in\Omega$;
\item
  $K(\Omega)\le\alpha$;
\item
  $K(\omega) \ge \log\left|\Omega\right| - \beta$.
\end{itemize}
Let $N(k,\alpha,\beta)$ be the number of $(\alpha,\beta)$-stochastic objects of complexity at most $k$.
Typical cases are $\alpha,\beta=\epsilon k$ and $\alpha,\beta=\sqrt{k}$.
What can we say about the asymptotics of $N(k,\alpha,\beta)/2^k$ as $k\to\infty$?
Is it true that $N(k,\alpha,\beta)/2^k\to0$, if $k$ increases and $\alpha/k,\beta/k\to0$?

It is very likely that by $K$ Kolmogorov meant plain Kolmogorov complexity,
nowadays usually denoted by $C$,
while his questions also make sense for prefix complexity.
It is also likely that there is a typo in the definition of $N(k,\alpha,\beta)$
and in fact it means the number of $(\alpha,\beta)$-nonstochastic objects of complexity at most $k$.

Kolmogorov used the condition
$K(\omega)\ge\log\left|\Omega\right|-\beta$
rather than
$K(\omega\mid\Omega)\ge\log\left|\Omega\right|-\beta$,
which we used in the main note.
The suggestion to measure the randomness deficiency of $\omega$
as an element of $\Omega$ by
$\log\left|\Omega\right|-K(\omega\mid\Omega)$
rather than
$\log\left|\Omega\right|-K(\omega)$
is due to Per Martin-L\"of (personal communication),
and it leads to an elegant theory:
see, e.g., Propositions~11 and~13 in the summary \cite[Appendix~1]{Vovk:2025ML}.

Kolmogorov's last question,
``Is it true that $N(k,\alpha,\beta)/2^k\to0$\dots?'',
should probably be replaced by
``When is it true that $N(k,\alpha,\beta)/2^k\to0?$''
The examples of pairs $(\alpha,\beta)$ that he lists
may be complemented by the more challenging case $\alpha,\beta=\log k$.
\end{document}